\newcommand{\C}{\ensuremath{\mathbb{C}}}
\newcommand{\R}{\ensuremath{\mathbb{R}}}
\newcommand{\Z}{\ensuremath{\mathbb{Z}}}
\newcommand{\N}{\ensuremath{\mathbb{N}}}
\newcommand{\Tr}{\text{Tr}}
\newcommand{\Det}{\text{Det}}
\theoremstyle{plain}
\newtheorem{theorem}{Theorem}[section]
\newtheorem{lemma}[theorem]{\bf Lemma}
\newtheorem{prop}[theorem]{\bf Proposition}
\newtheorem*{theorem*}{Theorem}
\newtheorem{corollary}[theorem]{\bf Corollary}
\theoremstyle{definition}
\newtheorem{example}[theorem]{Example}
\newtheorem{remark}[theorem]{Remark}
\newtheorem{definition}[theorem]{Definition}
\author{Chandrasheel Bhagwat and Ayesha Fatima}
\address{Indian Institute of Science Education and Research\\ Pune\\ India.}
\email{cbhagwat@iiserpune.ac.in, ayesha.fatima@students.iiserpune.ac.in}
\date{\today}
\subjclass[2000]{Primary 05C25; 05C38; Secondary 11M41}
\title[]{On The Length Spectra of Simple Regular Periodic Graphs}
\begin{document}
\begin{abstract}
	One can define the notion of primitive length spectrum for a simple regular periodic graph via counting the orbits of closed reduced primitive cycles under an action of a discrete group of automorphisms (\cite{GIL}).  We prove that this primitive length spectrum satisfies an analogue of the ‘\textit{Multiplicity one}’ property. 
	We show that if all but finitely many primitive cycles in two simple regular periodic graphs have equal lengths, then all the primitive cycles have equal lengths. 
	This is a graph-theoretic analogue of a similar theorem in the context of geodesics on hyperbolic spaces (\cite{BR}).
	We also prove, in the context of actions of finitely generated abelian groups on a graph, 
	that if the adjacency operators (\cite{CLR}) for two actions of such a group on a graph are similar, 
	then corresponding periodic graphs are length isospectral.
\end{abstract}
\maketitle

\section{Introduction}
The analogy between prime numbers and prime geodesics on a compact hyperbolic surface was considered by Selberg in his study of the counterpart of prime number theorem for compact Riemann surfaces. 
The subsequent works of Selberg\cite{SEL}, Gangolli \cite{GAN1} and Wakayama \cite{WAK} 
developed the analytic theory of zeta functions for 
hyperbolic spaces which resemble the theory L-functions for automorphic forms. 

Many mathematicians (e.g. Sunada, Chung, Ihara, Hashimoto) have studied graph theory 
with a paradigm of harmonic analysis. They have studied the zeta function associated to finite regular graphs
and developed analogies with hyperbolic spaces. 

It is well known that the Fourier coefficients of cusp forms satisfy a multiplicity one property. 
In \cite{BR}, an analogue of multiplicity one type property 
for the length spectrum of even dimensional compact hyperbolic spaces was proved and 
in \cite{KEL}, some refinements of some similar results have been proved.

One can define notions of length spectrum and primitive length spectrum for simple regular periodic graphs,
which are basically countable connected graphs with an action of a subgroup of automorphisms which satisfy some properties.  
In the first half of this paper, we establish an analogous multiplicity one property for the primitive length spectrum for these graphs.

The proof of the theorem uses the analytic properties of the Ihara zeta functions for periodic graphs studied in \cite{GIL}. 
The proof uses methods similar to the ones used in \cite{BR} and \cite{MM}.

In the later half of this paper, 
we prove a theorem about the dependence of the spectrum of adjacency operators on the length spectrum in the 
context of actions of finitely generated abelian groups on a graph. We prove that if the adjacency operators (see \cite{CLR}) 
for two actions of such a group on a graph are similar, then corresponding periodic graphs are length isospectral. 
This is a discrete analogue of the fact that the Laplace spectrum of a compact hyperbolic manifold determines 
its length spectrum (see \cite{GAN2}). 
In this context, the operators are not necessarily self-adjoint (example \ref{exmp}) and 
hence we work with a different hypothesis. 
For the proof, we again use the Ihara zeta function and its relation with length spectrum.

\section{Preliminaries}\label{sec:prelims}
 A graph $X$ consists of a non-empty set $V(X)$ of elements called the vertices of $X$ 
 together with a set $E(X)$ of unordered pairs of distinct vertices. 
 An element $\lbrace v_1, \, v_2\rbrace \, \in \, E(X)$ is called an edge connecting the vertices $v_1$ and $v_2$; 
 $v_1$ and $v_2$ are called adjacent vertices (written as $v_1 \, \sim \, v_2$). 
 A \textit{path} of \textit{length} $m$ in $X$ from $u \, \in \, V(X)$ to $v \, \in \, V(X)$ is a sequence of 
 $m+1$ vertices $(u \, = \,v_0, \, \ldots , \, v \, = \, v_m)$ such that $v_i$ is adjacent to $v_{i+1}$ for 
 $i \, = \, 0, \, \ldots , \, m$. 
 A path can also be denoted by a sequence of edges $(e_1, \, \ldots , \, e_m)$ 
 where $e_i$ is an edge connecting $v_{i \, - \, 1}$ and $v_i$. 
A path is said to be closed if $u \, = \, v$.\smallskip

\begin{definition}\cite{GIL}
A countable discrete subgroup $\Gamma$ of automorphisms of $X$ is said to act on $V(X)$
\begin{enumerate}{}{}
 \item[(1)] \textit{without inversions} if for any edge $e = \lbrace v_1, \, v_2 \rbrace$, $\nexists \, \gamma \, \in \,  \Gamma$ 
 such that $\gamma(v_1) = v_2$ and $\gamma(v_2) = v_1$ (No edge is inverted),
 \smallskip
 \item[(2)] \textit{discretely} if $\Gamma_{v} := \lbrace \gamma \in \Gamma \mid \gamma(v) = v \rbrace$ is finite,
 \smallskip
 \item[(3)] \textit{with bounded co-volume} if $vol(X/\Gamma) := \sum\limits_{v \in \mathcal{F}_0} \frac{1}{|\Gamma_v|} < \infty$, 
 where $\mathcal{F}_0$ is the complete set of representatives of the equivalence classes in $V(X)/\Gamma$,
 \smallskip
 \item[(4)] \textit{co-finitely} if $\mathcal{F}_0$ is finite.
\end{enumerate}
\end{definition}\smallskip

The finite graph $X/\Gamma$ is denoted by $B$. The vertex set of $X/\Gamma$ is $\mathcal{F}_0$ and adjacency in $X/\Gamma$ is defined by $[v] \sim [w]$ if there exist $v' \in [v] $ and $w' \in [w]$ such that $v' $ and $v' $ and $w'$ are adjacent in $X$. \smallskip

\begin{definition}[Periodic Graph] A pair $(X, \, \Gamma)$ consisting of a countable, infinite, connected
graph $X$ and a countable subgroup $\Gamma$ of the automorphism group of $X$ is called a 
\textit{periodic graph} if $\Gamma$ acts on $V(X)$ discretely and co-finitely.
\end{definition}\smallskip

\begin{definition}[Reduced Paths]
A path $(e_1, \, \ldots , \, e_m)$ is said to have \textit{backtracking} if for any $ i \, \in \lbrace 1, \, \ldots , \, m-1 \rbrace$, $e_i = e_{i+1}$
(traversed in opposite directions). A path with no backtracking is called \textit{proper}.\smallskip

A closed path is called \textit{primitive} if it is not obtained by going $n \, \geq \, 2$ times around some other closed path. \smallskip


A proper closed path $C = (e_1, \, \ldots , \, e_m)$ is said to have a \textit{tail} if $\exists \, k \, \in \, \mathbb{N} $ such that
$e_{m-j+1} = e_{j}$ (traversed in opposite directions) for some $k$ consecutive values of $j$.  \smallskip

(For example, consider a path whose edges are $e_1, e_2, e_3 ,e_4$ where $e_1$ and $e_4$ are opposite orientations of same un-oriented edge say $e$. Here $m=4, j=1, k=1$ ) \smallskip

Proper tail-less closed paths are called \textit{reduced} closed paths. 
The set of reduced closed paths is denoted by $ \mathcal{C}$. 
\end{definition}\smallskip

A \textit{cycle} is an equivalence class of closed paths, any of which can be 
obtained from another by a cyclic permutation of vertices. 
Simply put, a cycle is a closed path with no specified starting point. 
We denote the length of the cycle $C$, which is the number of edges in any closed path representing the cycle, by $\ell(C)$.
The set of reduced cycles is denoted by $\mathcal{R}$. 
The subset consisting of primitive reduced cycles is denoted by $\mathcal{P}$. 
The primitive reduced cycles are also called prime cycles. \smallskip

\begin{definition}[$\Gamma$-Equivalence Relation]
Two reduced cycles $C, \, D \, \in \, \mathcal{R}$ are said to be $\Gamma$-equivalent, and written as $C \sim_{\Gamma} D$, 
if there exists an isomorphism $\gamma \, \in \, \Gamma$ such that $D = \gamma (C)$. 
The set of $\Gamma$-equivalence classes of reduced cycles is denoted by $[\mathcal{R}]_{\Gamma}$. 
The set of $\Gamma$-equivalence classes of prime cycles is denoted by $[\mathcal{P}]_{\Gamma}$.
\end{definition}

Since $\Gamma$ is a subgroup of the automorphism group, the length of any two cycles in a 
$\Gamma$-equivalence class will be same. 
Therefore, for a $\Gamma$-equivalence class of reduced cycles $\xi \, \in \, [\mathcal{R}]_{\Gamma}$ 
we can define $\ell(\xi) \, := \, \ell(C)$ for any representative $C$ in $\xi$. \smallskip

\begin{remark}
 Another notion of length, called effective length of a cycle $C$, has been defined in \cite{GIL}.
It is defined as the length of the prime cycle underlying $C$. We do not make use of this notion in this paper.
\end{remark}\smallskip


Consider the action of $\Gamma$ on the set of cycles.
For any cycle $C$, the \textit{stabiliser} of $C$ in $\Gamma$ is the subgroup 
$\Gamma_C \, := \, \lbrace  \gamma \, \in \, \Gamma \mid \gamma(C) = C\rbrace$. 
Also, if $C_{1}$ and $C_2$ are $\Gamma$-equivalent, say $C_{1}, \, C_2 \, \in \, \xi $, 
then $\Gamma_{C_1}$ and $\Gamma_{C_2}$ are conjugates in $\Gamma$. 
Hence $|\Gamma_{C_1}| \, = \, |\Gamma_{C_2}|$. 
This enables us to define $S(\xi)$ as the cardinality of $\Gamma_C$ for any $C$ in $\xi$. \smallskip

Henceforth, any $(X, \, \Gamma)$ is a simple, $q \, + \, 1$ regular
periodic graph such that the action of $\Gamma$ is without inversions and with bounded co-volume. 
We further assume that $\Gamma$ is such that $\Gamma_C$ and $\Gamma_v$ are trivial for any cycle $C$ and 
any vertex $v$, respectively. 
The condition of trivial stabilizer is analogous to free action of torsion-free lattices on hyperbolic spaces. \smallskip

As an example, see the the graph in Example \ref{exmp} in Section \ref{fga-groups-actions}. 

\begin{remark} It can be seen that for a cycle graph $C_m$ on $m$ vertices such that $\Z$ acts by cyclic permutations, the stabiliser of every vertex and of every cycle is $m\Z$ and hence non-trivial. Therefore, the conditions on the stabilisers that we make are not followed by these graphs. 
\end {remark} 

We define the \textit{length spectrum} of the periodic graph $(X, \, \Gamma)$ to be the function $L_{\Gamma}$ defined on 
$\mathbb{N}$ by, 
$$L_{\Gamma}(n) \, = \, \text{The number of $\xi \, \in \, [\mathcal{R}]_{\Gamma}$ such that $\ell(\xi) = n$ }.$$

We define the \textit{primitive length spectrum} of the periodic graph $(X, \, \Gamma)$ to be the function $PL_{\Gamma}$ defined on 
$\mathbb{N}$ by, 
$$PL_{\Gamma}(n) \, = \, \text{The number of $\xi \, \in \, [\mathcal{P}]_{\Gamma}$ such that $\ell(\xi) = n$ }.$$

In this paper, we establish the following multiplicity one property for the length spectrum of periodic graphs:
\begin{theorem}\label{main}[Multiplicity One property for Primitive Length Spectrum of Regular Periodic Graphs]
Let $(X, \, \Gamma_1)$ and $(X, \, \Gamma_2)$ be two simple, $q+1$ regular periodic graphs. 
Further, assume that $\Gamma_1$ and $\Gamma_2$ act on $V(X)$ without inversions and with bounded co-volume 
and such that the stabilizer of any cycle with respect to either subgroup is trivial. 
Suppose $PL_{\Gamma_1}(n) \, = \, PL_{\Gamma_2}(n)$ for all but finitely many $n \, \in \, \mathbb{N}$. 
Then $PL_{\Gamma_1}(n) \, = \, PL_{\Gamma_2}(n)$ for all $n \, \in \, \mathbb{N}$.
Furthermore, we can conclude that $L_{\Gamma_1}(n) \, = \, L_{\Gamma_2}(n)$ for all $n \, \in \, \mathbb{N}$.
\end{theorem}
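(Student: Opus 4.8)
The plan is to encode both primitive length spectra into Ihara zeta functions and to exploit their determinant expressions from \cite{GIL}. Since all stabilizers are trivial, I write $Z_i(u) = \prod_{\xi \in [\mathcal{P}]_{\Gamma_i}}(1-u^{\ell(\xi)})^{-1}$ for $i = 1,2$, so that the hypothesis that $PL_{\Gamma_1}(n) = PL_{\Gamma_2}(n)$ for all but finitely many $n$ says exactly that the ratio
\[ R(u) := \frac{Z_1(u)}{Z_2(u)} = \prod_{d \in S}(1-u^d)^{-a_d}, \qquad a_d := PL_{\Gamma_1}(d) - PL_{\Gamma_2}(d), \]
is a rational function supported on a finite set $S \subset \N$, with all its zeros and poles at roots of unity. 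I want to show $a_d = 0$ for every $d$. The \emph{furthermore} clause is then immediate: every reduced cycle is a power of a unique underlying prime cycle and each $\Gamma_i$ preserves this structure, giving $L_{\Gamma_i}(n) = \sum_{d \mid n} PL_{\Gamma_i}(d)$, so $PL_{\Gamma_1} \equiv PL_{\Gamma_2}$ forces $L_{\Gamma_1} \equiv L_{\Gamma_2}$.

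The decisive input is the determinant formula of \cite{GIL}: for the adjacency operator $A$ on $\ell^2(V(X))$ one has $Z_i(u)^{-1} = (1-u^2)^{-\chi_i}\,\mathrm{Det}_{\Gamma_i}(I - Au + qu^2 I)$, where $\mathrm{Det}_{\Gamma_i}$ is the Fuglede--Kadison determinant attached to the $\Gamma_i$-trace $\tau_{\Gamma_i}$ and $\chi_i$ is the $L^2$-Euler characteristic of $(X,\Gamma_i)$. The essential point is that $A$ is \emph{the same} operator for both groups; only the trace changes. Writing $\mu_i$ for the $\Gamma_i$-spectral measure (density of states) of $A$, supported in $[-(q+1), q+1]$ with total mass $\tau_{\Gamma_i}(I) = |V(X/\Gamma_i)|$, the determinant admits the representation $\log \mathrm{Det}_{\Gamma_i}(I - Au + qu^2 I) = \int \log(1 - \lambda u + q u^2)\, d\mu_i(\lambda)$ for small $u$. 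Taking logarithms of the identity for $R$ gives, with $\nu := \mu_1 - \mu_2$,
\[ \int \log(1 - \lambda u + qu^2)\, d\nu(\lambda) = (\chi_1 - \chi_2)\log(1-u^2) + \sum_{d \in S} a_d \log(1 - u^d). \]

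The heart of the argument is a singularity comparison. The right side is singular precisely at roots of unity. The left side, after the substitution $w = qu + u^{-1}$ (so that $1 - \lambda u + qu^2 = -u(\lambda - w)$), is governed by the Stieltjes transform of $\nu$, which is analytic off $\mathrm{supp}\,\nu \subseteq [-(q+1), q+1]$. A root of unity $\zeta$ yields a real value $w(\zeta) = q\zeta + \bar\zeta$ only when $(q-1)\,\mathrm{Im}\,\zeta = 0$, i.e. when $\zeta = \pm 1$ (here $q > 1$). Hence the left side can be singular at a root of unity only at $u = \pm 1$, which forces $a_d = 0$ for every $d \geq 3$ (take the largest surviving $d$ and read off the singularity at a primitive $d$-th root of unity, where no other term contributes). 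With only $a_1, a_2$ possibly nonzero, the identity now exhibits the Stieltjes transform of $\nu$ as analytic except possibly at $w = \pm(q+1)$, so by Stieltjes inversion $\nu$ is carried by the spectral edges $\lambda = \pm(q+1)$. These carry no $\Gamma_i$-trace mass, since $\pm(q+1)$ is not an $\ell^2$-eigenvalue of $A$ on an infinite connected graph; thus $\nu = 0$ and $\mu_1 = \mu_2$. Comparing total masses gives $|V(X/\Gamma_1)| = |V(X/\Gamma_2)|$, hence $\chi_1 = \chi_2$, and the identity collapses to $a_1 \log(1-u) + a_2 \log(1-u^2) \equiv 0$, whence $a_1 = a_2 = 0$.

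The main obstacle is the analytic bookkeeping behind the singularity comparison. One must justify the integral representation of the Fuglede--Kadison determinant and the validity of Stieltjes inversion across the \emph{continuous} spectrum of $A$: namely, that the branch cuts arising from the absolutely continuous part of $\nu$ genuinely lie on $\{|u| = 1/\sqrt q\} \cup \R$ and cannot conspire to cancel the root-of-unity singularities of the right side except at $\pm 1$. The second delicate point is the vanishing of $\Gamma_i$-mass at the spectral edges, i.e. the absence of an $\ell^2$-eigenfunction at the top of the spectrum of an infinite $(q+1)$-regular graph. Once these analytic inputs from \cite{GIL} are secured, the incompatibility between roots of unity (on $|u|=1$) and the real spectral band $[-(q+1),q+1]$ forces the conclusion.
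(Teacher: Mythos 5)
Your reduction of the ``furthermore'' clause via $L_{\Gamma_i}(n)=\sum_{d\mid n}PL_{\Gamma_i}(n/d)$ matches the paper, and the idea of comparing singularities of the zeta-ratio at roots of unity is the right one. But there is a genuine gap in the heart of your argument: the identity
\[
\int \log(1-\lambda u+qu^2)\,d\nu(\lambda) \;=\; (\chi_1-\chi_2)\log(1-u^2)+\sum_{d\in S}a_d\log(1-u^d)
\]
is established only on a small disc around $u=0$, and analytic continuation guarantees it only on the connected component of $0$ inside the set where the left side is analytic, namely $\{0\}\cup\{u\neq 0: qu+u^{-1}\notin\mathrm{supp}\,\nu\}$. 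Since you have no control on $\mathrm{supp}\,\nu$ beyond $\mathrm{supp}\,\nu\subseteq[-(q+1),q+1]$, this set can be exactly $\Omega_q$, and $\Omega_q$ is \emph{disconnected}: the circle $|u|=1/\sqrt{q}$ together with the real slits $1/q\le|x|\le 1$ separates an inner component (containing $0$, inside $|u|<1/\sqrt{q}$) from an outer one. Every root of unity other than $\pm 1$ lies in the outer component (and $\pm 1$ lie on the removed slits), so the equation you want to evaluate at a primitive $d$-th root of unity, $d\ge 3$, is simply not available there. Worse, the inner-component identity yields no contradiction at all: under your own substitution $w=qu+u^{-1}$, the inner component maps onto $\C\setminus[-(q+1),q+1]$, which is precisely the region where the Stieltjes transform of $\nu$ is analytic \emph{automatically}; moreover the relevant branch $u_-(w)$ satisfies $|u_-(w)|\le 1/\sqrt{q}<1$, so it never meets a root of unity. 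Hence the identity on the inner component is self-consistent for arbitrary values of $a_d$ --- it merely prescribes what $\nu$ must be on $[-2\sqrt{q},2\sqrt{q}]$. The same defect undermines your later step concluding $\mathrm{supp}\,\nu\subseteq\{\pm(q+1)\}$.

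The missing ingredient is exactly the functional equation, which is the crux of the paper's proof. Your change of variable already carries it: $w(u)=w\bigl(1/(qu)\bigr)$, and the involution $u\mapsto 1/(qu)$ swaps the two components of $\Omega_q$; the elementary computation $1-\lambda v+qv^2=(qu^2)^{-1}(1-\lambda u+qu^2)$ with $v=1/(qu)$ gives the spectral side an explicit transformation law under this involution, and this is how \cite{GIL} proves that the completed zeta function $\xi_{X,\Gamma}$ extends to \emph{both} components of $\Omega_q$ and satisfies $\xi_{X,\Gamma}(u)=\xi_{X,\Gamma}(1/(qu))$. Only with this can the identity be transported to the outer component, where the roots of unity live; once you add it, your singularity comparison goes through, but the argument then becomes essentially the paper's: the paper works directly with the finite products, invokes the extension to $\Omega_q$ and the functional equation from \cite{GIL}, and matches the zeros on $|u|=1$ (coming from one group's surviving factors) against those on $|u|=1/q$ (coming from the other's). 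Note also that both your argument and the paper's implicitly require $q>1$; as written, your proof is incomplete without the functional-equation step, and that step is not a technicality but the essential input.
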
\smallskip

\section{Ihara's Zeta Function for Periodic Graphs}
In this section, we consider the Ihara's zeta function for periodic graphs, as described in \cite{GIL}. 
This zeta function $Z_{X, \, \Gamma}$, an analogue of the Riemann zeta function for graphs, 
is defined as the following Euler product over the prime cycles: 

\begin{definition}[Zeta function]
$$ Z_{X, \, \Gamma}(u) \, := \, \prod\limits_{[C]_{\Gamma} \in [\mathcal{P}]_{\Gamma}} (1\, - \, u^{\ell(C)} )^{-\frac{1}{|\Gamma_{C}|}}$$ 
The above product converges for $u \in \C$ such that $|u| < \dfrac{1}{q}$. 
\end{definition}	\medskip

We now state some analytic properties of the above zeta function (See 
\cite[Theorem~2.2 (i), p.~1345, Theorem~5.1 (iii), p.~1354, Theorem~5.2 (ii), p.~1355]{GIL}).
The results in this section are valid without the assumption that $\Gamma_C$ is trivial. 	\bigskip

\begin{figure} \label{fig:Omega_q}
\begin{center}
  \includegraphics[width=0.5\textwidth]{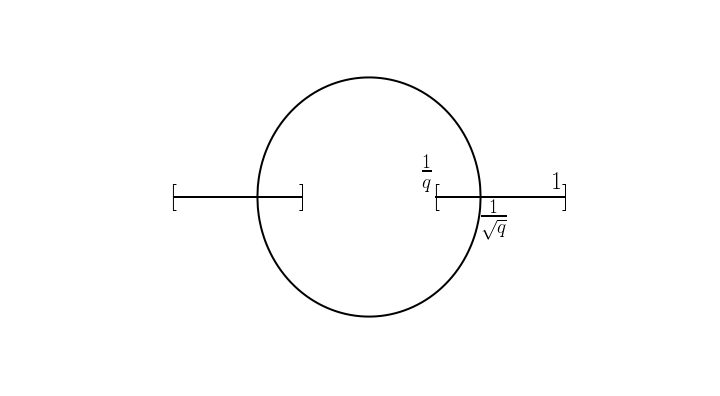}
  \caption{The set $\Omega_q$}
  \label{fig:Omega_q}
\end{center}
\end{figure}
\bigskip

\begin{theorem} Let $Z_{X, \, \Gamma}$ be the zeta function of the periodic graph $(X, \Gamma)$. Then, 
\begin{enumerate}
\item $Z_{X, \, \Gamma}(u)$ defines a holomorphic function in the open disc $\lbrace u \, \in \, \C : |u| \, < \, \frac{1}{q} \rbrace$.
\smallskip
\item We can define a completion of $Z_{X, \, \Gamma}$ which can be extended to a holomorphic function on the open set $\Omega_q$ (see figure \ref{fig:Omega_q}).
The completion $\xi_{X, \, \Gamma}$ is given by 
$$ \xi_{X, \, \Gamma}(u) \, := \, (1\, - \, u^2)^{-\chi{(B)}} (1 \, - \, u)^{|V(B)|} (1 \, - \, qu)^{|V(B)|} Z_{X, \, \Gamma}(u).$$
\smallskip
\item The function $\xi_{X, \, \Gamma}$ extends to $\Omega_q$ and satisfies the following functional equation:
$$\xi_{X, \, \Gamma}(u)  \, = \, \xi_{X, \, \Gamma}\left(\frac{1}{qu}\right).$$ 
\end{enumerate}
\end{theorem}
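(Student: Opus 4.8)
These three assertions collect the analytic facts about $Z_{X,\,\Gamma}$ proved in \cite{GIL}, and here is the route I would take to establish them. Part (1) is essentially formal: the Euler product defining $Z_{X,\,\Gamma}$ converges absolutely and locally uniformly on the disc $\lbrace |u| < 1/q \rbrace$ (a fact already recorded above), and an absolutely, locally uniformly convergent product of non-vanishing holomorphic factors is itself holomorphic and non-vanishing there. So the real content lies in (2) and (3), and for these the natural tool is a determinant formula of Ihara--Bass type, transplanted from finite graphs to the infinite periodic setting.

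The plan is to realise $Z_{X,\,\Gamma}$ through operators on $\ell^2(V(X))$ that commute with the $\Gamma$-action. Because the stabilizers are finite and the co-volume is bounded, the algebra $\mathcal{N}_\Gamma$ of bounded $\Gamma$-invariant operators carries a canonical finite trace $\tau_\Gamma$, normalised so that $\tau_\Gamma(I) = \mathrm{vol}(X/\Gamma)$; under the standing hypothesis of trivial stabilizers this equals $|V(B)|$. Using $\tau_\Gamma$ one defines an analytic (Fuglede--Kadison type) determinant $\Det_\Gamma$. First I would expand $\log Z_{X,\,\Gamma}(u)$ as a sum over closed reduced paths and identify it, via the standard combinatorics of the edge (Hashimoto) operator $T$, with $-\sum_{m\ge 1}\tfrac{u^m}{m}\,\tau_\Gamma(T^m)$, which gives $Z_{X,\,\Gamma}(u)^{-1}=\Det_\Gamma(I-uT)$ on the disc. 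The Bass reduction from the edge operator to the vertex adjacency operator $A$ then yields the key identity
$$Z_{X,\,\Gamma}(u)^{-1} \,=\, (1-u^2)^{-\chi(B)}\,\Det_\Gamma\!\big(I - Au + qu^2 I\big),$$
valid wherever everything converges. Since $A$ is a bounded self-adjoint operator with spectrum in $[-(q+1),\,q+1]$, the right-hand side is defined through the spectral calculus and continues holomorphically beyond the circle $|u|=1/q$, namely to all $u$ for which $I - Au + qu^2 I$ stays invertible; this is exactly the region $\Omega_q$ of Figure \ref{fig:Omega_q}. Multiplying by the elementary factors that isolate the spectral endpoint $\lambda=q+1$ (where $1-\lambda u+qu^2=(1-u)(1-qu)$) produces the completion $\xi_{X,\,\Gamma}$ and establishes (2).

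For the functional equation (3) the governing observation is the symmetry of the quadratic $qu^2-\lambda u+1$ under $u\mapsto 1/(qu)$: one checks that $q(1/(qu))^2-\lambda(1/(qu))+1=(qu^2)^{-1}(qu^2-\lambda u+1)$, so the operator $I-Au+qu^2I$ is carried, up to the scalar $(qu^2)^{-1}$, to itself. Applying $\Det_\Gamma$, and using $\Det_\Gamma(cT)=c^{\tau_\Gamma(I)}\Det_\Gamma(T)$ with $\tau_\Gamma(I)=|V(B)|$, converts this into an explicit power of $qu^2$ relating the two determinants. The prefactors $(1-u^2)^{-\chi(B)}(1-u)^{|V(B)|}(1-qu)^{|V(B)|}$ in the definition of $\xi_{X,\,\Gamma}$ are calibrated precisely so that, once one substitutes $1-1/(qu)=(qu-1)/(qu)$ and $1-1/u=(u-1)/u$, every power of $q$ and $u$ cancels and the identity collapses to $\xi_{X,\,\Gamma}(u)=\xi_{X,\,\Gamma}(1/(qu))$; indeed the $(1-u^2)^{\pm\chi(B)}$ factors cancel the determinant-formula prefactor, leaving $\xi_{X,\,\Gamma}(u)=(1-u)^{|V(B)|}(1-qu)^{|V(B)|}\Det_\Gamma(I-Au+qu^2I)^{-1}$, which is manifestly symmetric.

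The main obstacle throughout is analytic rather than combinatorial: one must verify that $\Det_\Gamma$ is well defined on the relevant operators (that $\log(I-uT)$ and $\log(I-Au+qu^2I)$ are $\tau_\Gamma$-trace-class on $\Omega_q$), that $\tau_\Gamma$ genuinely behaves as a normalised dimension so that $|V(B)|$ and $\chi(B)$ enter with the correct multiplicities, and that the spectral-calculus continuation reaches all of $\Omega_q$ without meeting the spectrum of $A$. Once these von Neumann-algebraic foundations are secured, parts (2) and (3) follow by the same bookkeeping as in the classical finite-graph Ihara--Bass theory.
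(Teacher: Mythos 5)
The paper itself gives no proof of this theorem: it is quoted directly from \cite{GIL} (Theorems 2.2, 5.1 and 5.2 there), so the only ``proof'' in the paper is that citation. Your sketch correctly reconstructs exactly the argument of that source --- absolute, locally uniform convergence of the Euler product for (1), the von Neumann determinant formula $Z_{X,\,\Gamma}(u)^{-1} = (1-u^2)^{-\chi(B)}\,\Det_{\Gamma}(\Delta(u))$ (which the paper itself imports later as \cite[Theorem~4.1]{GIL}) together with spectral continuation of $\Det_{\Gamma}$ for (2), and the scaling identity $\Delta\!\left(\frac{1}{qu}\right) = (qu^2)^{-1}\Delta(u)$ combined with $\Det_{\Gamma}(cT) = c^{\Tr_{\Gamma}(I)}\Det_{\Gamma}(T)$ for (3) --- so it follows essentially the same route as the proof the paper defers to.
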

\medskip

Here, $\chi^2 (X)$ is the $L^2$-Euler characteristic of $(X, \, \Gamma)$, defined as 
$$\chi^2 (X) \, := 
\, \sum\limits_{v \in \mathcal{F}_0} \frac{1}{|\Gamma_v|}  \, - \,  \sum\limits_{v \in \mathcal{F}_1} \frac{1}{|\Gamma_e|}$$ 
where $\mathcal{F}_0$ is the set of representatives of equivalence classes in $V(X)/ \Gamma$ and 
$\mathcal{F}_1$ is the set of representatives of equivalence classes in $E(X)/ \Gamma$. 
It is shown in \cite[Theorem~5.1 (i), p.~1354]{GIL} that 
$$\chi^2(X)\,  = \, \chi(B)  \, = \, |V(B)| (1-q)/2,$$
where $\chi(B) := \, V(B) \, - \, E(B)$. 
It is further shown that $\chi^2(X) \, \in \, \Z$.
\medskip

The set $\Omega_q$ is an open subset of $\C$ defined as:
\begin{multline*}
\Omega_q : = \, \R^2 \, \setminus \,  \left(  \left\lbrace (x, \, y) \, \in \, \R^2 : x^2 \, + \, y^2 \, = \, \frac{1}{q} \right\rbrace \, \cup \, 
\left\lbrace (x, \, 0) \, \in \, \R^2 : \, \frac{1}{q} \, \leq \, |x| \, \leq \, 1 \right\rbrace\right).
\end{multline*}
This is the complement of the union of the circle of radius $\dfrac{1}{\sqrt {q}}$ around $0$ and the two line segments as shown in the figure \ref{fig:Omega_q}).\medskip

\section{Proof of Main Theorem}
Let $(X, \Gamma_1)$ and $(X, \Gamma_2)$ be as in section 2. 
Let $Z_{\Gamma_1}$ and $Z_{\Gamma_2}$ denote the Ihara zeta functions 
of $(X, \, \Gamma_1)$ and $(X, \, \Gamma_2)$, respectively. 
Let $\xi_{\Gamma_1}$ and $\xi_{\Gamma_2}$ be 
the extensions to $\Omega_q$ of $\Z_{\Gamma_1}$ and $Z_{\Gamma_2}$ respectively, as defined above. 
Let $B_i$ denote the finite graph $X/\Gamma_i$ for $i \, = \, 1, \, 2$.
\medskip

We use the hypothesis that $\Gamma_C$ is trivial for all cycles $C$. From the definition of the zeta function, we have in the region $|u| \, < \, \frac{1}{q}$, 
\medskip

$$\dfrac{\xi_{\Gamma_1}(u)}{\xi_{\Gamma_2}(u)} = 
\dfrac{(1 \, - \, u^2)^{-\chi(B_1)} \left[(1 \, - \, u) (1 \, - \, qu)\right]^{|V(B_1)|} \, \prod\limits_{[C]_{\Gamma_1} \in [\mathcal{P}]_{\Gamma_1}} (1\, - \, u^{\ell(C)})^{-1}}
{(1 \, - \, u^2)^{-\chi(B_2)} \left[(1 \, - \, u) (1 \, - \, qu)\right]^{|V(B_2)|} \, \prod\limits_{[C^{'}]_{\Gamma_2} \in [\mathcal{P}]_{\Gamma_2}} (1\, - \, u^{\ell(C^{'})} )^{-1}}. $$\\

Under the hypothesis of Theorem \ref{main}, all but finitely many factors in the product 
terms of the numerator and the denominator cancel out. 
Therefore, there exist finite subsets $S_1$ and $S_2$ such that in $|u| \, < \, \frac{1}{q}$, 

$$\dfrac{\xi_{\Gamma_1}\left(u\right)}{\xi_{\Gamma_2}\left(u\right)} = 
\dfrac{\left(1 \, - \, u^2\right)^{-\chi\left(B_1\right)} \left[\left(1 \, - \, u\right) \left(1 \, - \, qu\right)\right]^{|V\left(B_1\right)|} \, \prod\limits_{i \, \in S_1} \left(1\, - \, u^{\ell(C_i)}\right)^{-1}}
{\left(1 \, - \, u^2\right)^{-\chi\left(B_2\right)} \left[\left(1 \, - \, u\right) \left(1 \, - \, qu\right)\right]^{|V\left(B_2\right)|} \, \prod\limits_{j \, \in S_2} \left(1\, - \, u^{\ell(C_j)} \right)^{-1}}. $$\\

Since the product terms in the above equation are over finite indexing sets 
and the other terms are all polynomials, the above expression defines a meromorphic function on the entire complex plane.
Here we have crucially used the assumption that $\Gamma_C$ is trivial for all cycles. 
On the other hand, $\dfrac{\xi_{\Gamma_1}}{\xi_{\Gamma_2}}$ is meromorphic on $\Omega_q$ 
and hence the two expressions must agree for all $ u \, \in \, \Omega_q$. In particular, \smallskip

\begin{multline*}
\begin{split}
\dfrac{\xi_{\Gamma_1}\left(\frac{1}{qu}\right)}{\xi_{\Gamma_2}\left(\frac{1}{qu}\right)}  
& = 
\dfrac{\left(1 \, - \, \left(\dfrac{1}{qu}\right)^2\right)^{-\chi\left(B_1\right)} \left[ \left(1 \, - \, \dfrac{1}{qu}\right) \left(1 \, - \, \dfrac{1}{u}\right)\right]^{|V\left(B_1\right)|} }
{\left(1 \, - \, \left(\dfrac{1}{qu}\right)^2\right)^{-\chi\left(B_2\right)} \left[ \left(1 \, - \, \dfrac{1}{qu}\right) \left(1 \, - \, \dfrac{1}{u}\right)\right]^{|V\left(B_2\right)|}} \\
& \times  
 \dfrac{\prod\limits_{i \, \in \, S_1} \left(1\, - \, \left(\dfrac{1}{qu}\right)^{\ell(C_i)} \right)^{-1}}{\prod\limits_{j \, \in \, S_2} \left(1\, - \, \left(\dfrac{1}{qu}\right)^{\ell(C_j)}\right)^{-1}}
 \end{split}
\end{multline*}
\smallskip
\medspace

From the functional equation applied to the two zeta functions, we have for $u \, \in  \, \Omega_q$, 
$$ \dfrac{\xi_{\Gamma_1}\left(u\right)}{\xi_{\Gamma_2}\left(u\right)} = \dfrac{\xi_{\Gamma_1}\left(\frac{1}{qu}\right)}{\xi_{\Gamma_2}\left(\frac{1}{qu}\right)}$$
Since $ \chi(B_i) = |V(B_i)|(1 \, - \, q)/2$ for $i \, = \, 1, \, 2$,
we can rearrange the terms to get 
\begin{align*}
&N_1(u) \, \times \, 
\prod\limits_{j \,  \in \, S_2} \left(1\, - \, u^{\ell(C_j)} \right) \prod\limits_{i \, \in \, S_1} \left(1\, - \, \left(\frac{1}{qu}\right)^{\ell(C_i)} \right)
\\
=\,\,  &N_2(u) \, \times \, 
\prod\limits_{i \, \in \, S_1} \left(1\, - \, u^{\ell(C_i)} \right) \prod\limits_{j \, \in \, S_2} \left(1\, - \, \left(\frac{1}{qu}\right)^{\ell(C_j)} \right),
\end{align*}
where $N_1$ and $N_2$ are as follows: 

\begin{align*}
N_1(u) \, &= \,\left(1 \, - \, u^2\right)^{\left(|V\left(B_1\right)| \, -\, |V\left(B_2\right)|\right) \times \frac{(q \, - \, 1)}{2}}\left[\left(1 \, - \, u\right)\left(1 \, - \, qu\right)\right]^{|V\left(B_1\right)| \, -\, |V\left(B_2\right)|} \\[5pt]
N_2(u) \, &= \, \left(1  -  \frac{1}{q^2 u^2}\right)^{\left(|V\left(B_1\right)|  - |V\left(B_2\right)|\right) \times \frac{(q  -  1)}{2}} \left[\left(1  -  \frac{1}{u}\right) \left(1  -  \frac{1}{qu}\right)\right]^{|V\left(B_1\right)| - |V\left(B_2\right)|}
\end{align*}\\

The expressions $N_1(u)$ and $N_2(u)$ have no zeros in $\Omega_q$. 
The zero of the first product term in the expression on the left-hand side lie on a circle of radius $1$ centered at origin, 
while the zero of the second product lie on the circle of radius $\frac{1}{q}$ centered at origin. 
Similarly, the zero of the first product term in the expression on the right hand side lie on a circle of radius $1$ 
centered at origin, while the zero of the second product lie on the circle of radius $\frac{1}{q}$ centered at origin.
From the equality of the expressions, we conclude that
the zeros of the expression on the left-hand side which lie on the unit circle coincide with 
the zeros of the expression on the right-hand side which lie on the unit circle.
Hence we get equality of sets with multiplicity: 
$$ \left\{ \dfrac{2\pi k}{\ell(C_i)} \, : \, k \, \in \, \Z \, ; \, i \, \in \, S_1  \right\} \, = 
\, \left\{ \dfrac{2\pi k}{\ell(C_j)} \, : \, k \, \in \, \Z \, ; \, j \, \in \, S_2  \right\} $$
Thus we conclude that $PL_{\Gamma_1}(n) \, = PL_{\Gamma_2}(n)$ for all $n \, \in \, \mathbb{N}$. 
\medskip

Furthermore, we can check that, for $i \, = \, 1, \, 2$,
$$L_{\Gamma_i}(n) \, = \, \sum\limits_{d \vert n}PL_{\Gamma_i}\left(\frac{n}{d}\right) \, \,  \forall \, n \in \,  \mathbb{N}.$$

Every cycle of length $n$ has an underlying primitive cycle of length $\dfrac{n}{d}$ where $d | n$. Hence the above equation holds. Using this, we know that if $PL_{\Gamma_1}(n) \, = \, PL_{\Gamma_2}(n)$ for all $n \, \in \, \mathbb{N}$, then 
$L_{\Gamma_1}(n) \, = \, L_{\Gamma_2}(n)$ for all $n \, \in \, \mathbb{N}$. 
Hence $ L_{\Gamma_1}(n) \, = \, L_{\Gamma_2}(n)$ for all $n \, \in \mathbb{N}$.

\begin{flushright} $\Box$
\end{flushright}

It follows from the above discussion that $N_1(u) \, = \, N_2(u)$ for all $u \, \in \, \Omega_q$. 
Without loss of generality, we can assume that $|V(B_1)| \, \geq \, |V(B_2)|$, 
which makes $N_1$ and $N_2$ polynomial expressions.
Therefore, we can say that $N_1(u) \, = \, N_2(u)$ for all $u \, \in \, \C$. 
The zeros of $N_1(u)$ are $\{\pm 1, \, \frac{1}{q} \}$ while the zeros of $N_2(u)$ are $\{ \pm \frac{1}{q} , \, 1 \}$. 
Hence these two expressions have to be identically zero which can happen only when $|V(B_1)| \, = \, |V(B_2)|$. 
This proves the following corollary:

\begin{corollary}Suppose $X, \, \Gamma_1, \, \Gamma_2$ be as in Theorem \ref{main}. 
If $L_{\Gamma_1}(n) \, = \, L_{\Gamma_2}(n)$ for all but finitely many $n \, \in \, \mathbb{N}$ 
then $|V(X/\Gamma_1)| \, = \, |V(X/\Gamma_2)|$.
\end{corollary}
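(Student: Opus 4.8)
The plan is to feed the hypothesis into the functional equation for the completed zeta functions and read off the vertex counts from the exponents in the completion factors. Write $\Delta = |V(B_1)| - |V(B_2)|$ and, following the computation in the proof of Theorem~\ref{main}, set
$$N_1(u) = (1-u^2)^{\Delta(q-1)/2}\big[(1-u)(1-qu)\big]^{\Delta}, \qquad N_2(u) = N_1\!\left(\tfrac{1}{qu}\right).$$
Substituting the completion formula and using $\chi(B_i) = |V(B_i)|(1-q)/2$, I would rewrite the ratio $\xi_{\Gamma_1}/\xi_{\Gamma_2}$ as $N_1(u)\,Z_{\Gamma_1}(u)/Z_{\Gamma_2}(u)$; the functional equation $\xi_{\Gamma_i}(u) = \xi_{\Gamma_i}(1/qu)$ then yields $N_1(u)\,Z_{\Gamma_1}(u)/Z_{\Gamma_2}(u) = N_2(u)\,Z_{\Gamma_1}(1/qu)/Z_{\Gamma_2}(1/qu)$ on $\Omega_q$. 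So the target equality $N_1 = N_2$ is equivalent to the invariance of $Z_{\Gamma_1}/Z_{\Gamma_2}$ under $u\mapsto 1/qu$.

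Granting $N_1 = N_2$ on $\Omega_q$, the conclusion follows quickly. Assuming without loss of generality that $|V(B_1)|\ge|V(B_2)|$ makes $\Delta\ge 0$, so that after clearing the pole at the origin both $N_1$ and $N_2$ are polynomials; agreeing on an open set, they agree on all of $\C$ by the identity theorem. The zeros of $N_1$ are $\{1,-1,1/q\}$ and those of $N_2$ are $\{1,-1/q,1/q\}$, and since $-1\neq -1/q$ for $q>1$ these sets differ unless $\Delta = 0$, in which case both expressions are identically $1$. Hence $\Delta = 0$, that is $|V(X/\Gamma_1)| = |V(X/\Gamma_2)|$.

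The one genuinely delicate step is establishing $N_1 = N_2$ from the spectral hypothesis. The clean mechanism is that $Z_{\Gamma_1}/Z_{\Gamma_2}$ should differ from $1$ by only finitely many Euler factors, so that it continues meromorphically to $\C$ with all its zeros and poles on the unit circle, while those of $Z_{\Gamma_1}(1/qu)/Z_{\Gamma_2}(1/qu)$ sit on $|u|=1/q$; the functional-equation identity then forces the cycle products to cancel and leaves $N_1 = N_2$. This finiteness is immediate once the \emph{primitive} spectra $PL_{\Gamma_i}$ agree off a finite set, as guaranteed by Theorem~\ref{main}, but it is not formally automatic from agreement of the \emph{total} spectra $L_{\Gamma_i}$ alone, since the Möbius inversion $PL_{\Gamma_i}(n)=\sum_{d\mid n}\mu(d)L_{\Gamma_i}(n/d)$ can spread a single discrepancy in $L$ over infinitely many values of $PL$. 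Bridging this gap — by either invoking agreement of the primitive spectra or performing the zero/pole comparison directly on the meromorphic ratio — is where I expect the real work to lie; the remainder is a routine manipulation of the completion factors together with the identity theorem.
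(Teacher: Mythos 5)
Your proposal follows the paper's own route essentially step for step: the same completion factors $N_1(u)$ and $N_2(u)=N_1\!\left(\frac{1}{qu}\right)$, the same use of $\chi(B_i)=|V(B_i)|(1-q)/2$ together with the functional equation to get the identity on $\Omega_q$, and the same endgame --- assume WLOG $|V(B_1)|\geq |V(B_2)|$, extend $N_1=N_2$ from the open set $\Omega_q$ to all of $\C$, and compare the zero sets $\{\pm 1,\,1/q\}$ and $\{1,\,\pm 1/q\}$ to force the exponent $|V(B_1)|-|V(B_2)|$ to vanish. (You even quietly repair a slip in the paper, which says the two expressions must be ``identically zero'' when what is meant is that they must have no zeros, i.e.\ be identically $1$.)

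The ``delicate step'' you flag is precisely the step the paper does not actually carry out under the stated hypothesis. The paper obtains $N_1=N_2$ by citing ``the above discussion,'' i.e.\ the proof of Theorem \ref{main}: there, under the assumption that the \emph{primitive} spectra agree off a finite set, the surviving Euler factors are indexed by finite sets $S_1,S_2$, the zero comparison on the unit circle gives $S_1=S_2$ as multisets, and the finite products cancel from the functional-equation identity, leaving $N_1=N_2$. So what the paper proves is the corollary with hypothesis $PL_{\Gamma_1}(n)=PL_{\Gamma_2}(n)$ for all but finitely many $n$, whereas the corollary is stated with hypothesis on $L_{\Gamma_i}$. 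Your observation that agreement of $L$ off a finite set does not formally imply agreement of $PL$ off a finite set is correct: if, say, $L_{\Gamma_1}(3)-L_{\Gamma_2}(3)=c\neq 0$ were the only discrepancy, then M\"obius inversion (using $PL(1)=L(1)=0$ for simple graphs) gives $PL_{\Gamma_1}(3p)-PL_{\Gamma_2}(3p)=-c\neq 0$ for every sufficiently large prime $p$, so the Euler-factor cancellation is no longer a finite matter. Thus the gap you identify is a gap in the paper's own proof of the stated corollary, not a defect peculiar to your write-up; modulo that shared gap --- which disappears if one reads the corollary's hypothesis as being on the primitive spectra, as the paper's derivation evidently intends --- your argument coincides with the paper's.
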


\section{Actions of finitely generated abelian groups}\label{fga-groups-actions}
One of the important results of Ihara zeta functions of finite graphs is the rationality formula, 
which shows that the zeta function is reciprocal of a polynomial. 
This is done by expressing the zeta function as a determinant of a deformation of the graph Laplacian.
The analogous result for periodic graphs was proved in \cite{GIL} but it involves the von Neumann determinant. 

For completeness, we give a brief overview of relevant material about von Neumann algebras. 
Details and a more general treatment can be found in \cite{CLR} and \cite{GIL}.
The \textit{von Neumann algebra} of a countable discrete group $\Gamma$ 
is the algebra $\mathcal{N}(\Gamma)$ of bounded $\Gamma$-equivariant operators from $\ell^{2}(\Gamma)$ 
to $\ell^{2}(\Gamma)$. 
The \textit{von Neumann trace} of an element $f \, \in \, \mathcal{N}({\Gamma})$ is defined by 
$$\Tr_{\Gamma}(f) \, = \, \langle f(\delta_e), \, \delta_e \rangle$$ 
where $\delta_e \, \in \, \ell^2(\Gamma)$, the Kronecker delta function, is a function 
which takes the value $1$ on the identity element $e$ of $\Gamma$ and is zero elsewhere. 

For $H \, = \, \bigoplus\limits_{i \, = 1}^{n} \ell^{2}(\Gamma)$ and a bounded $\Gamma$-equivariant operator 
$T : H \longrightarrow H$, the von Neumann trace $\Tr_{\Gamma}(T)$ is defined as 
$$\Tr_{\Gamma}(T) \, = \, \sum_{i = 1}^{n} \Tr_{\Gamma}(T_{ii})$$
where $T_{ii}$ is the operator obtained by restricting the domain of $T$ to $i^{\text{th}}$ component of $H$. 
The determinant $\Det_{\Gamma}(T)$ is defined in \cite{GIL} via formal power series as 
$(\exp \, \circ \, \Tr_{\Gamma} \, \circ \, \text{Log})(T)$. 
It converges for $T$ sufficiently close to the identity operator.
A detailed analysis of this determinant can be found in \cite{GIL}.  

Let $A$ be the adjacency operator of the graph $X$, i.e., $A$ is an operator from $\ell^2(V(X))$ to itself defined by
$(Af)(v) \, = \, \sum\limits_{w \sim v}f(w)$. 
Let $Q$ be the operator from $\ell^2(V(X))$ to itself defined by $(Qf)(v) \, = \, (\text{deg}(v) \, - \, 1)f(v)$ and for $u \, \in \, \C$, let
$$\Delta(u) \, := \, I \, - \, uA \, + \, u^2Q.$$ 

We call $\Delta(u)$ the \textit{deformed Laplacian} of $(X, \Gamma)$.
Then the determinant formula (\cite[Theorem~4.1]{GIL}) shows that 
$$Z_{X, \, \Gamma}(u)^{-1} \, = \, (1 \, - u^2)^{-\chi(B)} \Det_{\Gamma}(\Delta(u)) \quad \text{for} \quad |u| \, < \, \dfrac{1}{\alpha}.$$ 

Here $\alpha \, = \, \dfrac{d\, + \, \sqrt{d^2 \, + \, 4d}}{2}$ and $d \, = \, \text{sup}_{v \, \in \, V(X)} \text{deg}(v)$. \medskip

\begin{remark} Note that in the case of $(q+1)$-regular graphs, $d = q+1$ and hence the above formula is valid for $u \in \C$ such that $|u| < \dfrac{1}{q}.$
\end{remark} \medskip

In this section, we consider the special case where $\Gamma$ is a finitely generated abelian group.
Therefore $\Gamma$  can be written as $\Z^r \times \Z_{n_1} \times \, \ldots \, \times \, \Z_{n_k}$, for some 
integers $r, n_1, \, n_2 , \, \ldots , \, n_k$ such that
$r \, \geq \, 0$, $n_i \, \geq \, 2$ for $i \,= \,  1, \ldots , \, k$, and 
$n_1 / \, n_2  / \, \ldots / \, n_k$. 
For the rest of the section, we assume as before that 
periodic graphs $(X, \, \Gamma)$ are simple and $q+1$-regular such that
 $\Gamma$ acts on $V(X)$ without inversions and with bounded co-volume.
We also assume, as in section \ref{sec:prelims}, that $\Gamma_{C}$ and $\Gamma_{v}$ are trivial for every vertex $v$ and every cycle $C$. 
The regularity assumption will give $\Delta(u) \, = \, I \, - \, uA \, + \, u^2q$. 
The zeta function $Z_{X, \, \Gamma}(u)$ is computed explicitly for the case when $\Gamma \, = \, \Z$ in \cite{CLR}. 

Let $\Z_{n_i} \, = \, \langle s_i \, \mid \, s_{i}^{n_i} \, = \, 1\rangle$, for $i \, = \, 1, \, \ldots, \, k$,  
and let $\Z^{r} \, = \, 
\langle t_1\rangle \, \times \, \langle t_2 \rangle \, \times \, \dots \, \times \, \langle t_r\rangle$. 
Suppose that the number of orbits of action of $\Gamma$ on $X$ is $n$.  
Choosing the representatives of these orbits, we can identify 
$$\ell^2(V(X)) \, = \, \bigoplus_{n} \ell^{2}(\Gamma).$$ 

Following \cite{CLR}, we describe below how the adjacency operator $A$ can be written as a $n\, \times \, n$ matrix with entries in the ring
$$R \, = \Z[t_{1}^{\pm 1}, \, t_{2}^{\pm 1}, \, \ldots , \, t_{r}^{\pm 1}, \, s_{1}, \, s_2, \, \ldots , \, s_k] / \langle s_{i}^{n_i} \, - \, 1 \, \mid \, i \, = \, 1, \, \ldots, \, k \rangle.$$  

Let $[v_1], \, [v_2], \, \ldots , \, [v_n]$ be the $\Gamma$-equivalence classes of $V(X)$. 
We look for the elements in the orbit $[v_j]$ which are adjacent to the representative vertex $v_i$ of the orbit $[v_i]$.
Any element in the orbit $[v_j]$ is of the form $\gamma \cdot v_{j}$ where 
$\gamma \, = \, \left(t_{1}^{u_1} t_{2}^{u_2} \dots t_{r}^{u_r} s_{1}^{w_1}s_{2}^{w_2} \dots s_{k}^{w_k}\right) \, \in \, G$. 
We write $A_{ij} \, = \sum \gamma$ where the sum is taken over all 
$\gamma$ such that 
$v_{i} \, \sim \, \gamma \cdot v_{j}$.  \medskip

\begin{example}\label{exmp}
Consider the $4$-regular graph $(Y, \, \Z)$ with $V(Y) \, = \, \Z \, \cup \, \Z$ as shown in figure  \ref{fig:example}.
Let $V(Y) \, = \, \lbrace \, \ldots, \, v_{-1}, \, v_0, \, v_1, \, \ldots \rbrace \, \cup \,  
\lbrace \, \ldots, \, w_{-1}, \, w_0, \, w_1, \, \ldots \rbrace$. 
The action of $\Z \, = \,  \langle t \rangle $ we consider is given by $t\cdot v_{i} \, = \, v_{i+1}$ 
and $t \cdot w_{i} \, = \, w_{i+1}$. 
This action has two orbits, namely $[v_0]$ and $[w_0]$. 
Let $A$ be the $2 \, \times \, 2$ matrix representing the adjacency operator of $(Y, \, \Z)$. 
Then

$$A \, = \, 
\begin{pmatrix}
 t\, + \, t^{-1} & 1 \, + \, t \\
 1 \, + \, t^{-1} & t \, + \, t^{-1}
\end{pmatrix}
$$
\end{example}

\begin{figure}[H]
\begin{center}
  \includegraphics[width=1\textwidth]{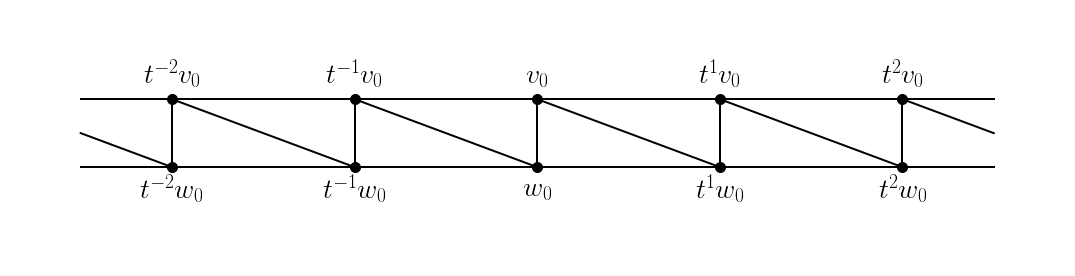}
  \caption{A $4$-regular periodic graph}
  \label{fig:example}
\end{center}
\end{figure}

Under Fourier transform, $\ell^{2}(\Z) \, = \, L^2(S^1)$ where 
$S^1 \, = \, \lbrace e^{i\theta} \, \mid \, \theta \, \in \, (-\pi, \, \pi] \rbrace$ with normalised measure. 
Therefore we have 
$$\ell^{2}(\Gamma) \, = \, \bigoplus_{r}L^2(S^1) \, \bigoplus \, \ell^{2}(\Z_{n_1}) \, \bigoplus \, \ell^{2}(\Z_{n_2}) \, 
\dots \, \bigoplus \, \ell^{2}(\Z_{n_k}).$$ 
This shows that 
$$\ell^{2}(V(X)) \, = \, \bigoplus_{nr}L^{2}(S^1) \, \bigoplus_{n} \ell^{2}(\Z_{n_1}) \, 
\bigoplus_{n} \ell^{2}(\Z_{n_2}) \, \dots \, \bigoplus_{n} \ell^{2}(\Z_{n_k}).$$
Under the Fourier transform, multiplication by $t_i$ becomes multiplication by the function 
$e^{i\theta_i}$ for $i \, = \, 1, \, 2, \, \ldots , \, r$. 
Hence $\Delta(u)$ is represented by a $n \, \times \, n$ matrix, whose entries are in terms of the variables
$\theta_1, \, \theta_2 , \, \ldots \, \theta_r, \, s_1, \,  s_2, \, \ldots , \,s_k$. 
We denote this matrix by $M_{u, \, (X, \Gamma)}$. 
(The dependence on the above variables is suppressed for ease of notation.)
To further simply the notation, we denote $M_{u, \, (X, \Gamma)}$ with $M_{u}$ when it is clear which periodic graph 
$(X, \Gamma)$ is being referred.  
Therefore we have \\

\begin{align*}
\Det_{\Gamma}(\Delta(u)) \, &= \, \text{exp} \circ \Tr_{\Gamma} \circ \text{Log}(\Delta(u)) \\
&= \, \text{exp} \sum\limits_{i = 1}^{n} \, \Tr_{\Gamma}(\text{Log}(\Delta(u)))_{ii} \\
&= \, \text{exp} \sum\limits_{i = 1}^{n} 
\left(\sum_{\Z_{n_1}} \,\sum_{\Z_{n_2}} \, \ldots\,  \sum_{\Z_{n_k}}\, 
\left( \int_{S_{1}} \int_{S_{1}} \dots \int_{S_{1}}\, (\text{Log}(M_u))_{ii} \, d\theta_1 \, \dots d\theta_r \right)\right)\\
&= \, \text{exp} 
\left(\sum_{\Z_{n_1}}\, \sum_{\Z_{n_2}} \, \ldots \, \sum_{\Z_{n_k}}\, 
\left( \int_{S_{1}} \int_{S_{1}} \dots \int_{S_{1}}\, \sum\limits_{i = 1}^{n}(\text{Log}(M_u))_{ii} \, d\theta_1 \, \dots d\theta_r \right)\right)\\
&= \, \text{exp} 
\left(\sum_{\Z_{n_1}}\, \sum_{\Z_{n_2}} \, \ldots \,  \sum_{\Z_{n_k}} \, 
\left( \int_{S_{1}} \int_{S_{1}} \dots \int_{S_{1}}\, \Tr(\text{Log}(M_u)) \, d\theta_1 \, \dots d\theta_r \right)\right)\\
&= \, \text{exp} 
\left(\sum_{\Z_{n_1}}\, \sum_{\Z_{n_2}} \, \ldots \, \sum_{\Z_{n_k}} \, 
\left( \int_{S_{1}} \int_{S_{1}} \dots \int_{S_{1}}\, \text{log}(\Det(M_u)) \, d\theta_1 \, \dots d\theta_r \right)\right)
\end{align*}\\

Note that the matrix $M_u$ in fact has entries which are polynomials in the variables 
$e^{\pm i\theta_1}, \, \ldots , \, e^{\pm i\theta_r},$ $ \,  s_1, \, \ldots , \, s_k$. 
Therefore we can say $M_u \, \in \, GL_{n}(R)$, after the change of variables 
$e^{i \theta_j} \, \longrightarrow \, t_j$, for $j \, = \, 1, \, \ldots , \, r$.

Let $(X, \, \Gamma)_1$ and $(X, \, \Gamma)_2$ be two two simple, regular periodic graphs. 
(The underlying infinite graph $X$ of the two periodic graphs is same but has different $\Gamma$-actions.)
Further, assume that both the actions of $\Gamma$ on $V(X)$ are without inversions and with bounded co-volume.
Let $M_{1, u}$ and $M_{2, u}$ denote $M_{u, \, (X, \Gamma)_1}$ and $M_{u, \,(X, \Gamma)_2}$ respectively.
Let $\Delta_{1}(u)$ and $\Delta_2(u)$ be the 
deformed Laplacians, $L_{\Gamma, \, 1}$ and $L_{\Gamma, \, 2}$ be the length spectra,  
$PL_{\Gamma, \, 1}$ and $PL_{\Gamma, \, 2}$ be the primitive length spectra
and $Z_{1}$ and $Z_{2}$ be the zeta functions 
of $(X, \Gamma)_1$ and $(X, \, \Gamma)_2$ respectively.
We can conclude the following lemma by using 
the expression of $\Det_{\Gamma_i}(\Delta_i(u))$ in terms of $M_{u, \,(X, \Gamma)_i}$. \medskip

\begin{lemma}\label{simlemma}
Suppose for a fixed $u$, the matrices $M_{1, u}$ and $M_{2, u}$ are conjugate in $GL_{n}(R)$.
Then $\Det_{\Gamma_1}(\Delta_1(u)) \, = \, \Det_{\Gamma_2}(\Delta_2(u))$.
\end{lemma}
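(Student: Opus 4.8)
The plan is to reduce the equality of the two von Neumann determinants to the elementary fact that conjugate matrices over a commutative ring have the same ordinary determinant, and then to feed this into the integral-and-sum formula for $\Det_{\Gamma}$ derived just above the statement. The key observation is that the displayed computation expresses $\Det_{\Gamma_i}(\Delta_i(u))$ \emph{only} through the single scalar quantity $\Det(M_{i,u})$, namely
\[
\Det_{\Gamma_i}(\Delta_i(u)) = \exp\left( \sum_{\Z_{n_1}} \cdots \sum_{\Z_{n_k}} \int_{S^1} \cdots \int_{S^1} \log\bigl(\Det(M_{i,u})\bigr)\, d\theta_1 \cdots d\theta_r \right),
\]
where $\Det$ denotes the ordinary determinant of the $n \times n$ matrix $M_{i,u} \in GL_{n}(R)$. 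Thus it suffices to show that $\Det(M_{1,u})$ and $\Det(M_{2,u})$ agree as inputs to this functional; everything to the right of the two determinants is identical for the two graphs.

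First I would invoke the hypothesis directly: since $M_{1,u}$ and $M_{2,u}$ are conjugate in $GL_{n}(R)$, write $M_{2,u} = P\,M_{1,u}\,P^{-1}$ with $P \in GL_{n}(R)$. Applying the ring-valued determinant $\Det \colon M_{n}(R) \to R$ and using that $R$ is commutative and $\Det(P) \in R^{\times}$, I get
\[
\Det(M_{2,u}) = \Det\bigl(P\,M_{1,u}\,P^{-1}\bigr) = \Det(P)\,\Det(M_{1,u})\,\Det(P)^{-1} = \Det(M_{1,u})
\]
as elements of $R$. Next I would note that each term of the sum over $\Z_{n_1} \times \cdots \times \Z_{n_k}$ together with the value of the integration variables $\theta_1, \dots, \theta_r$ corresponds to a specialization $t_j \mapsto e^{i\theta_j}$, $s_i \mapsto \zeta_{n_i}^{w_i}$, which is a ring homomorphism $R \to \C$. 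Because the determinant is a polynomial in the matrix entries, it is compatible with any ring homomorphism, so each such specialization sends the identity $\Det(M_{1,u}) = \Det(M_{2,u})$ in $R$ to an equality of complex numbers. Hence the two integrand functions $\log\bigl(\Det(M_{1,u})\bigr)$ and $\log\bigl(\Det(M_{2,u})\bigr)$ coincide pointwise, and summing and integrating gives $\Det_{\Gamma_1}(\Delta_1(u)) = \Det_{\Gamma_2}(\Delta_2(u))$.

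The content here is light, and there is no serious obstacle; the only points requiring a word of care are bookkeeping ones. First, one must make sure the power series $\text{Log}$ and the trace-determinant identity $\Tr(\text{Log}(M_u)) = \log\bigl(\Det(M_u)\bigr)$ used in the computation are valid, which holds for $u$ small (as in the determinant formula of \cite{GIL}, here for $|u| < 1/q$), so the claimed identity is first established in that range. Second, because the two determinant functions are \emph{literally equal} rather than merely equal up to modulus, no branch ambiguity arises in passing to $\log$: the logarithms are the same function, so their integrals and sums automatically agree. This completes the argument.
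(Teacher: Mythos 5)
Your proposal is correct and takes essentially the same route as the paper: the paper deduces the lemma directly from the displayed formula expressing $\Det_{\Gamma_i}(\Delta_i(u))$ solely through $\Det(M_{i,u})$, combined with the invariance of the ordinary determinant under conjugation in $GL_n(R)$. Your write-up simply makes explicit the details the paper leaves implicit (commutativity of $R$, compatibility of $\Det$ with the specializations $t_j \mapsto e^{i\theta_j}$, $s_i \mapsto \zeta_{n_i}^{w_i}$, and the absence of branch issues), all of which are accurate.
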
\medskip

\begin{theorem}
 Let $A_1$ and $A_2$ be the adjacency operators of the periodic graphs $(X, \Gamma)_1$ and 
 $(X, \, \Gamma)_2$ respectively. Suppose $A_1$ and $A_2$, as $n \times n$ matrices, are conjugate. 
 Then $L_{\Gamma, \, 1}(m) \, = \, L_{\Gamma, \, 2}(m)$ for all $m \, \in \, \N$. 
\end{theorem}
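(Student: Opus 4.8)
The plan is to convert the hypothesis on the adjacency operators into an equality of the two Ihara zeta functions, and then to read the length spectrum off the Taylor coefficients of that common zeta function. The bridge in both directions is the determinant formula $Z_{X,\Gamma}(u)^{-1} = (1-u^2)^{-\chi(B)}\Det_{\Gamma}(\Delta(u))$ together with Lemma \ref{simlemma}, so that no use of the functional equation or of $\Omega_q$ is needed; everything takes place on the disc $|u| < 1/q$.

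First I would record that, by the regularity assumption, the deformed Laplacian is $\Delta_i(u) = I - uA_i + qu^2$, so its matrix representative is $M_{i,u} = (1+qu^2)I - uA_i$ for $i = 1, 2$. Suppose $A_2 = P A_1 P^{-1}$ for some $P \in GL_n(R)$ realizing the assumed conjugacy. Since the scalar matrix $(1+qu^2)I$ is central, conjugating $M_{1,u}$ by the same $P$ gives $M_{2,u} = P M_{1,u} P^{-1}$ for every $u$; the crucial point is that the conjugator does not depend on $u$. Thus for each fixed $u$ the matrices $M_{1,u}$ and $M_{2,u}$ are conjugate in $GL_n(R)$, and Lemma \ref{simlemma} yields $\Det_{\Gamma_1}(\Delta_1(u)) = \Det_{\Gamma_2}(\Delta_2(u))$.

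Next I would compare the two determinant formulas. Because $A_1$ and $A_2$ are both $n \times n$, the two actions have the same number $n = |V(B_1)| = |V(B_2)|$ of vertex orbits, whence $\chi(B_1) = \chi(B_2) = n(1-q)/2$ and the prefactors $(1-u^2)^{-\chi(B_i)}$ coincide. Combining this with the determinant identity from the previous step gives $Z_1(u)^{-1} = Z_2(u)^{-1}$, and hence $Z_1 = Z_2$ on $|u| < 1/q$.

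Finally I would extract the spectra. Since the stabilizers $\Gamma_C$ are trivial, $\log Z_i(u) = \sum_{[C] \in [\mathcal{P}]_{\Gamma_i}} \sum_{m \geq 1} \tfrac{1}{m} u^{m\ell(C)}$, so the coefficient of $u^N$ in $u\tfrac{d}{du}\log Z_i(u)$ equals $\sum_{d \mid N} d\, PL_{\Gamma,i}(d)$. Equality of $Z_1$ and $Z_2$ forces these coefficients to agree for all $N$, and Möbius inversion then gives $PL_{\Gamma,1}(N) = PL_{\Gamma,2}(N)$ for every $N$; the relation $L_{\Gamma,i}(m) = \sum_{d \mid m} PL_{\Gamma,i}(m/d)$ upgrades this to $L_{\Gamma,1} = L_{\Gamma,2}$, as required. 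The step I expect to carry the real weight is the passage from conjugacy of the adjacency matrices to conjugacy of the deformed Laplacians by a single $u$-independent matrix, since this is precisely what lets Lemma \ref{simlemma} apply uniformly in $u$ and so turns a purely algebraic hypothesis into an identity of analytic functions; the final combinatorial extraction of $PL$ and $L$ is routine once $Z_1 = Z_2$ is in hand.
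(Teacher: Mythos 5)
Your proposal is correct, and its first half --- passing from conjugacy of $A_1, A_2$ to conjugacy of the matrices $M_{i,u}$ via the observation that $Q = qI$ is scalar, then invoking Lemma \ref{simlemma} and the determinant formula to get $Z_1 = Z_2$ on $|u| < 1/q$ --- is exactly the paper's argument. Two points differ. First, you explicitly justify the cancellation of the prefactors $(1-u^2)^{-\chi(B_i)}$ by noting that both matrices being $n \times n$ forces $|V(B_1)| = |V(B_2)| = n$, hence $\chi(B_1) = \chi(B_2)$; the paper passes over this silently, so your version is actually slightly more complete. Second, where the paper extracts the length spectrum from $Z_1 = Z_2$ via its Proposition \ref{length} --- writing each zeta as $\prod_{l \in S_i}(1-u^l)^{-1}$, comparing lowest-order Taylor coefficients to equate the smallest length and its multiplicity, and inducting --- you instead take the logarithmic derivative, identify the coefficient of $u^N$ in $u\frac{d}{du}\log Z_i(u)$ as $\sum_{d \mid N} d\,PL_{\Gamma,i}(d)$, and apply M\"obius inversion. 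Both extractions are valid and of comparable difficulty; yours has the small advantage of giving a closed-form inversion rather than an induction, while the paper's has the advantage of being stated as a reusable standalone proposition (which it also cites from the main theorem's context). One cosmetic remark: the emphasis you place on the conjugator $P$ being independent of $u$ is not actually load-bearing, since Lemma \ref{simlemma} is applied separately for each fixed $u$, so even a $u$-dependent conjugacy would suffice.
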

\begin{proof}
If $A_1$ and $A_2$ are similar, then 
$\Delta_1(u)$ is similar to $\Delta_2(u)$ for any complex number $u$ in the disc $ |u| \, < \, \frac{1}{q}$ . This follows from the fact that 
$$\Delta_{i}(u) \, := \, I \, - \, uA{i} \, + \, u^2Q.$$

In the present situation, $Q$ is just the scalar operator $qI$. Hence the operator which conjugates $A_1$ and $A_2$ also conjugates $\Delta_1(u)$ is similar to $\Delta_2(u)$.

 From the lemma \ref{simlemma}, we have $\Det_{\Gamma_1}(\Delta_1(u)) \, = \, \Det_{\Gamma_2}(\Delta_2(u))$ 
for all $u$ such that $ |u| \, < \, \frac{1}{q}$. 
From the determinant formula of the zeta function, we get $Z_1(u) \, = \, Z_2(u)$ for all 
$ |u| \, < \, \frac{1}{q}$. The proof follows from Proposition \ref{length}.
\end{proof}
\medskip

\begin{prop}\label{length}
Under the hypothesis of Theorem \ref{main} if $Z_1(u) \, = \, Z_2(u)$ then the length spectra $PL_{\Gamma, \, 1} = \, PL_{\Gamma, \, 2}$.
\end{prop}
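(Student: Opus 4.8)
The plan is to exploit the Euler product definition of the Ihara zeta function together with the triviality of the cycle stabilizers to recover the primitive length spectrum directly from the Taylor coefficients of $\log Z_i$. First I would observe that, because $\Gamma_C$ is trivial for every cycle, every exponent $-1/|\Gamma_C|$ in the product equals $-1$, so that for $i = 1, 2$
$$Z_i(u) = \prod_{[C]_{\Gamma} \in [\mathcal{P}]_{\Gamma, i}} (1 - u^{\ell(C)})^{-1} = \prod_{n=1}^{\infty} (1 - u^n)^{-PL_{\Gamma, i}(n)},$$
the second equality coming from grouping the prime cycle classes according to their common length and using the definition of $PL_{\Gamma, i}(n)$ as the number of classes of length $n$. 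This rewriting is valid on the disc $|u| < \frac{1}{q}$ where the product converges, and in particular each $PL_{\Gamma, i}(n)$ is finite.

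Next I would take a logarithm. Both $Z_1$ and $Z_2$ are holomorphic and nonvanishing near $u = 0$ with $Z_i(0) = 1$, so $\log Z_i$ is well defined near the origin; using the expansion $-\log(1 - u^n) = \sum_{m \geq 1} u^{nm}/m$ and collecting the coefficient of $u^N$ gives
$$\log Z_i(u) = \sum_{N = 1}^{\infty} c_i(N)\, u^N, \qquad c_i(N) = \frac{1}{N} \sum_{d \mid N} d\, PL_{\Gamma, i}(d).$$
I would justify the interchange of summations by absolute convergence of the double series on compact subsets of $|u| < \frac{1}{q}$, which follows from convergence of the Euler product. Equivalently, to avoid choosing a branch of the logarithm, one may work with the logarithmic derivative $u Z_i'(u)/Z_i(u) = \sum_{N} \big(\sum_{d \mid N} d\, PL_{\Gamma, i}(d)\big) u^N$, and the bookkeeping is identical.

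By hypothesis $Z_1(u) = Z_2(u)$ as holomorphic functions on $|u| < \frac{1}{q}$, so their Taylor coefficients agree and therefore $c_1(N) = c_2(N)$ for every $N$, that is,
$$\sum_{d \mid N} d\, PL_{\Gamma, 1}(d) = \sum_{d \mid N} d\, PL_{\Gamma, 2}(d) \qquad \text{for all } N \in \N.$$
Finally I would run a strong induction on $N$ (equivalently, apply M\"obius inversion): the case $N = 1$ gives $PL_{\Gamma, 1}(1) = PL_{\Gamma, 2}(1)$, and assuming equality for all $d < N$, the term $d = N$ in the displayed sum isolates $N\, PL_{\Gamma, i}(N)$, forcing $PL_{\Gamma, 1}(N) = PL_{\Gamma, 2}(N)$. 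This yields $PL_{\Gamma, 1} = PL_{\Gamma, 2}$. The argument is essentially routine; the only points requiring care are the justification of the rearrangement of the double series (equivalently, the legitimacy of equating power-series coefficients) and the observation, crucial here as in the proof of Theorem \ref{main}, that triviality of the stabilizers makes the exponents honest positive integers, so that $\log Z_i$ has precisely the stated integer-weighted coefficients.
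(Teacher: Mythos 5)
Your proof is correct, but it routes the coefficient comparison differently than the paper does. The paper works directly with the Euler product: it expands each factor $(1 - u^{l})^{-1}$ as a geometric series, observes that if $l_0$ and $m_0$ are the smallest lengths occurring in the two spectra (with multiplicities $k_1$ and $k_2$), then $Z_1(u) = 1 + k_1 u^{l_0} + \cdots$ and $Z_2(u) = 1 + k_2 u^{m_0} + \cdots$, concludes $l_0 = m_0$ and $k_1 = k_2$ by equating the first nontrivial Taylor coefficients, and then cancels the common factor and iterates, concluding inductively that the two multisets of lengths coincide. Your version instead linearizes the product by passing to $\log Z_i$ (or the logarithmic derivative), which yields the closed-form identity that the $N$-th coefficient is $\frac{1}{N}\sum_{d \mid N} d\, PL_{\Gamma,\, i}(d)$, and finishes by strong induction, i.e.\ M\"obius inversion. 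Both arguments rest on the same two pillars --- the identity theorem for power series on the disc $|u| < \frac{1}{q}$, and the trivial-stabilizer hypothesis that turns every exponent $-1/|\Gamma_C|$ into $-1$ --- so they are close in spirit, but yours buys a more transparent induction: the paper's closing step (``we can inductively conclude that $S_1 = S_2$'') is left informal, whereas your divisor-sum identity makes the inductive step a one-line computation. What the paper's approach buys in exchange is that it never takes a logarithm, so no branch choice or rearrangement of a double series needs to be justified; you correctly anticipated both issues (nonvanishing of $Z_i$ on the disc, absolute convergence, or alternatively working with $uZ_i'/Z_i$), so there is no gap.
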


\begin{proof}
Let 
$S_i \, = \, \lbrace  l \, \in \, \N \, \mid \,  \ell(C) \, = \, l , \, C \, \in \,  [\mathcal{P}]_{\Gamma_i} \rbrace$, 
for $i \, = \, 1,\, 2$. In other words, $S_i$ is the multiset of the lengths that occur in the primitive length spectrum of $(X, \Gamma)_i$.
Using this notation, the zeta function for the periodic graph $(X, \Gamma)_{i}$, for $i \, = \, 1, \, 2$ 
can be written as 
$$Z_{i}(u) \, = \, \prod\limits_{l \, \in \, S_i} (1\, - \, u^{l} )^{-1}.$$
Therefore, for $|u| \, < \, \frac{1}{q}$, 
\begin{align*}
\prod\limits_{l \, \in \, S_1} (1\, - \, u^{l} )^{-1} \, &= \,  \prod\limits_{m \, \in \, S_2} (1\, - \, u^{m} )^{-1}\\
\prod\limits_{l \, \in \, S_1}(1 \, + \, u^l \, + \, u^{2l} \, + \, \ldots ) \, &= \, \prod\limits_{m \, \in \, S_2}(1 \, + \, u^m \, + \, u^{2m} \, + \, \ldots )
\end{align*}
Let $l_0$ and $m_0$ be the smallest elements of $S_{1}$ and $S_2$ respectively. 
Suppose $l_0$ occurs with multiplicity $k_1$ and $m_0$ occurs with multiplicity $k_2$. 
Upon expansion, the expression of $Z_1(u)$ is of the form 
$Z_{1}(u) \, = \, (1 \, + \, k_{1}u^{l_0} \, + \; \text{higher powers of} \; u)$ 
where as the expression of $Z_2(u)$ is of the form 
$Z_{1}(u) \, = \, (1 \, + \, k_{2}u^{m_0} \, + \; \text{higher powers of} \; u)$. 
This implies that $l_0 \, = \, m_0$ and $ k_1 \, = \, k_2$. 
Using the fact that the function $(1 \, - \, u^{l_0})$ has no poles in the disc $|u| \, < \, \frac{1}{q}$, 
we can inductively conclude that $S_1 \, = \, S_2$. 
Therefore  
$PL_{\Gamma, \, 1}(m) \, = \, PL_{\Gamma, \, 2}(m)$ and hence $L_{\Gamma, \, 1}(m) \, = \, L_{\Gamma, \, 2}(m)$ for all $m \, \in \, \N$. 
\end{proof}

{\bf Acknowledgements: } The first author is partially supported by DST-SERB ECRA research award grant 30116288 for the period 2017-2020 and the SERB MATRICS grant MTR/2018/000102 for the period 2019-2022. The second author was supported by CSIR PhD fellowship for the period 2013-18.



\begin{thebibliography}{ABC}
\bibitem[BR]{BR} Chandrasheel Bhagwat and C. S. Rajan, \textit{On a multiplicity one property for the length spectra of even dimensional compact hyperbolic spaces}, 
J. Number Theory 131 (2011), no. 11, pp. 2239–2244.
\bibitem[Gan1]{GAN1} Ramesh Gangolli, \textit{Zeta functions of Selberg's type for compact space forms of symmetric spaces of rank one}, 
Illinois J. Math. 21 (1977), 1-41.
\bibitem[Gan2]{GAN2} Ramesh Gangolli, \textit{The length spectra of some compact manifolds of negative curvature} 
J. Differential Geometry. 12 (1977) 403-424.
\bibitem[GIL]{GIL} Daniele Guido, Tommaso Isola, Michel L. Lapidus, \textit{Ihara's zeta function for periodic graphs and its approximation in the amenable case}, 
J. Functional Analysis 255 (2008), pp. 1339-1361.
\bibitem[Kel]{KEL} Dubi Kelmer, \textit{A refinement of strong multiplicity one for spectra of hyperbolic manifolds}, 
Tran. Amer. Soc. (2014), 366, pp. 5925-5961.
\bibitem[MM]{MM} Ram Murty and V. Kumar Murty, \textit{Strong Multiplicity one for Selberg’s class}, 
C.R. Acad. Sci. Paris, t.319, S\'{e}rie I, (1994) pp.315-320.
\bibitem[Sel]{SEL} Atle Selberg, \textit{Harmonic analysis and discontinuous subgroups in weakly symmetric Riemannian spaces with applications to Dirichlet series}, 
J. Indian Math. Soc (1956), vol 20, 47-48.
\bibitem[Wak]{WAK} Masato Wakayama, \textit{Zeta functions of Selberg’s type associated with homogeneous vector bundles},
Hiroshima Math. J. 15 (1985), pp. 235–295.
\bibitem[Clair]{CLR} Bryan Clair, \textit{Zeta functions of graphs with $Z$-actions}, 
Journal of Combinatorial Theory, Series B 99 (2009), pp. 48–61.


\end{thebibliography}
\end{document}